\documentclass[reqno]{amsart}
\usepackage{amssymb,amsmath}%
\usepackage{ifpdf}
\ifpdf
\usepackage[hyperindex]{hyperref}
\else
\expandafter \ifx \csname dvipdfm \endcsname \relax
\usepackage[hypertex,hyperindex]{hyperref}
\else
\usepackage[dvipdfm,hyperindex]{hyperref}
\usepackage[sort&compress]{cite}
\fi
\fi

\begin{document}
\title[On Tribonacci and Tribonacci-Lucas Quaternion Polynomials]{On Tribonacci and Tribonacci-Lucas Quaternion Polynomials}

\author[G. Cerda-Morales]{Gamaliel Cerda-Morales}

\address{Gamaliel Cerda-Morales \newline
 Instituto de Matem\'aticas, Pontificia Universidad Cat\'olica de Valpara\'iso, Blanco Viel 596, Valpara\'iso, Chile.}
\email{gamaliel.cerda.m@mail.pucv.cl}


\subjclass[2000]{Primary 11B39; Secondary 11B37, 11R52.}
\keywords{Tribonacci quaternion, Tribonacci-Lucas quaternion, Tribonacci quaternion polynomial, Tribonacci-Lucas quaternion polynomials.}

\begin{abstract}
In this paper, we introduce the Tribonacci and Tribonacci-Lucas quaternion polynomials. We obtain the Binet formulas, generating functions and exponential generating functions of these quaternions. Moreover, we give some properties and identities for the Tribonacci and Tribonacci-Lucas quaternions.
\end{abstract}

\maketitle
\numberwithin{equation}{section}
\newtheorem{theorem}{Theorem}[section]
\newtheorem{lemma}[theorem]{Lemma}
\newtheorem{proposition}[theorem]{Proposition}
\newtheorem{definition}[theorem]{Definition}
\newtheorem{corollary}[theorem]{Corollary}
\newtheorem*{remark}{Remark}

\section{Introduction}
For any positive real number $x$, the Tribonacci and Tribonacci-Lucas polynomials, $\{T_{n}(x)\}_{n\in \Bbb{N}}$ and $\{t_{n}(x)\}_{n\in \Bbb{N}}$, are defined by, for $n\geq 3$,
\begin{equation}\label{eq:1}
T_{n}(x)=x^{2}T_{n-1}(x)+xT_{n-2}(x)+T_{n-3}(x)
\end{equation}
and 
\begin{equation}\label{eq:2}
t_{n}(x)=x^{2}t_{n-1}(x)+xt_{n-2}(x)+t_{n-3}(x),
\end{equation}
respectively, where $T_{0}(x)=0$, $T_{1}(x)=1$, $T_{2}(x)=x^{2}$, $t_{0}(x)=3$, $t_{1}(x)=x^{2}$ and $t_{2}(x)=x^{4}+2x$.

Let $\alpha(x)$, $\omega_{1}(x)$ and $\omega_{2}(x)$ be the roots of the characteristic equation $\lambda^{3}-x^{2}\lambda^{2}-x\lambda-1=0$. Then, the Binet formulas for the Tribonacci and Tribonacci-Lucas polynomials are given by
\begin{align*}
T_{n}(x)&=\frac{\alpha^{n+1}(x)}{(\alpha(x)-\omega_{1}(x))(\alpha(x)-\omega_{2}(x))}-\frac{\omega^{n+1}(x)}{(\alpha(x)-\omega_{1}(x))(\omega_{1}(x)-\omega_{2}(x))}\\
&\ \ +\frac{\omega_{2}^{n+1}(x)}{(\alpha(x)-\omega_{2}(x))(\omega_{1}(x)-\omega_{2}(x))},\ n\geq0
\end{align*}
and $$t_{n}(x)=\alpha^{n}(x)+\omega_{1}^{n}(x)+\omega_{2}^{n}(x),\ n\geq0$$ with $\alpha(x)=\frac{x^{2}}{3}+A(x)+B(x)$, $\omega_{1}(x)=\frac{x^{2}}{3}+\epsilon A(x)+\epsilon^{2} B(x)$ and $\omega_{2}(x)=\frac{x^{2}}{3}+\epsilon^{2}A(x)+\epsilon B(x)$, where $$A(x)=\sqrt[3]{\frac{x^{6}}{27}+\frac{x^{3}}{6}+\frac{1}{2}+\sqrt{\frac{x^{6}}{37}+\frac{7x^{3}}{54}+\frac{1}{4}}},$$ $$B(x)=\sqrt[3]{\frac{x^{6}}{27}+\frac{x^{3}}{6}+\frac{1}{2}-\sqrt{\frac{x^{6}}{37}+\frac{7x^{3}}{54}+\frac{1}{4}}},$$ with $\epsilon=-\frac{1}{2}+\frac{i\sqrt{3}}{2}$. 

One can easily see that 
\begin{equation}\label{eq:3}
\alpha(x)+\omega_{1}(x)+\omega_{2}(x)=x^{2}\ \textrm{and}\ \alpha(x)\omega_{1}(x)\omega_{2}(x)=1. 
\end{equation}

The generating functions of the Tribonacci and Tribonacci-Lucas polynomials are given by 
\begin{equation}\label{eq:3.1}
G(y)=\sum_{n=0}^{\infty}T_{n}(x)y^{n}=\frac{y}{1-x^{2}y-xy^{2}-y^{3}}
\end{equation}
and
\begin{equation}\label{eq:3.2}
g(y)=\sum_{n=0}^{\infty}t_{n}(x)y^{n}=\frac{3-2x^{2}y-xy^{2}}{1-x^{2}y-xy^{2}-y^{3}},
\end{equation}
where $T_{n}(x)$ is the $n$-th Tribonacci polynomial and $t_{n}(x)$ is the $n$-th Tribonacci-Lucas polynomial. For more details and properties related to the Tribonacci and Tribonacci-Lucas polynomials, we refer to \cite{Ho,Ra1}. Taking $x=1$ in (\ref{eq:3.1}) and (\ref{eq:3.2}), we obtain the generating function of the Tribonacci and Tribonacci-Lucas numbers, respectively, for more of this type of numbers \cite{Al,Mc,Sp}.

A quaternion $q$, with real components $q_{r}$, $q_{i}$, $q_{j}$, $q_{k}$ and basis $\textbf{1}$, $\textbf{i}$, $\textbf{j}$, $\textbf{k}$, is an element of the form $$q=q_{r}+q_{i}\textbf{i}+q_{j}\textbf{j}+q_{k}\textbf{k},\ (q_{r}\textbf{1}=q_{r}),$$ where
\begin{equation}\label{eq:4}
   \begin{gathered}
     \textbf{i}^{2}=\textbf{j}^{2}=\textbf{k}^{2}=\textbf{i}\textbf{j}\textbf{k}=-1,\\
   \textbf{i}\textbf{j}=-\textbf{j}\textbf{i}=\textbf{k},\ \textbf{j}\textbf{k}=-\textbf{k}\textbf{j}=\textbf{i}, \textbf{k}\textbf{i}=-\textbf{i}\textbf{k}=\textbf{j}.
   \end{gathered}
\end{equation}

In \cite{Ho1}, Horadam defined the $n$-th Fibonacci and $n$-th Lucas quaternions as $$Q_{n}=F_{n}+F_{n+1}\textbf{i}+F_{n+2}\textbf{j}+F_{n+3}\textbf{k},\ n\geq0$$ and $$K_{n}=L_{n}+L_{n+1}\textbf{i}+L_{n+2}\textbf{j}+L_{n+3}\textbf{k},\ n\geq0$$
respectively, where $F_{n}$ and $L_{n}$ are the $n$-th Fibonacci number and the $n$-th Lucas number and $\textbf{i}$, $\textbf{j}$, $\textbf{k}$ satisfy the multiplication rules (\ref{eq:4}).

Recently, in \cite{Ce2}, we defined the $n$-th generalized Tribonacci quaternion as $$Q_{v,n}=V_{n}+V_{n+1}\textbf{i}+V_{n+2}\textbf{j}+V_{n+3}\textbf{k},\ n\geq0,$$ where $V_{n}$ is the $n$-th generalized Tribonacci number and $\textbf{i}$, $\textbf{j}$, $\textbf{k}$ satisfy the multiplication rules (\ref{eq:4}). 

The Fibonacci and Lucas quaternions have been studied in several papers. For instance, Iyer \cite{Iy1,Iy2} gave some relations connecting the Fibonacci and Lucas quaternions. Iakin \cite{Ia1,Ia3} introduced the concept of a higher order quaternion and generalized quaternions with quaternion components. In \cite{Ho2}, Horadam studied the quaternion recurrence relations. Swamy \cite{Sw} derived the relations of generalized Fibonacci quaternions. Halici \cite{Ha} derived the generating functions and many other identities for the Fibonacci and Lucas quaternions. Akyi\u{g}it et al. \cite{Ak1,Ak2} introduced the Fibonacci generalized quaternions and split Fibonacci quaternions. Cerda-Morales \cite{Ce1} gave some properties of third order Jacobsthal quaternions in a generalized quaternion algebra. Ram\'irez \cite{Ra2} has obtained some combinatorial properties of the $k$-Fibonacci and the $k$-Lucas quaternions. Catarino \cite{Ca} has derived some properties of the $h(x)$-Fibonacci quaternion polynomials.  Szynal-Liana and W\l och \cite{Sz} have introduced Pell quaternions and Pell octonions. Bolat and \.{I}pek \cite{Bo} have given various identities related to Pell quaternions and Pell-Lucas quaternions.

Inspired by these, in this paper, we introduce the Tribonacci and Tribonacci-Lucas quaternion polynomials. We obtain the Binet formulas, generating functions and exponential generating functions of these quaternions. Moreover, we give some properties for the Tribonacci and Tribonacci-Lucas quaternions.

\section{Some properties of the Tribonacci and Tribonacci-Lucas quaternion polynomials}

\begin{definition}\label{Def:1}
For $n\geq 0$, the Tribonacci and Tribonacci-Lucas quaternion polynomials are defined by
\begin{equation}\label{eq:5}
Q_{T,n}(x)=T_{n}(x)+T_{n+1}(x)\textbf{i}+T_{n+2}(x)\textbf{j}+T_{n+3}(x)\textbf{k}
\end{equation}
and
\begin{equation}\label{eq:6}
Q_{t,n}(x)=t_{n}(x)+t_{n+1}(x)\textbf{i}+t_{n+2}(x)\textbf{j}+t_{n+3}(x)\textbf{k}
\end{equation}
where $T_{n}(x)$ and $t_{n}(x)$ are the $n$-th Tribonacci polynomial and the $n$-th Tribonacci-Lucas polynomial. Here $\textbf{i}$, $\textbf{j}$, $\textbf{k}$ are quaternionic units which satisfy the multiplication rules (\ref{eq:4}).
\end{definition}

\begin{proposition}\label{prop:1}
For $n\geq0$, the following identities hold:
\begin{enumerate}
\item[(i).] $Q_{T,n+3}(x)=x^{2}Q_{T,n+2}(x)+xQ_{T,n+1}(x)+Q_{T,n}(x)$.
\item[(ii).] $Q_{t,n+3}(x)=x^{2}Q_{t,n+2}(x)+xQ_{t,n+1}(x)+Q_{t,n}(x)$.
\end{enumerate}
\end{proposition}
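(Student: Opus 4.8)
The plan is to reduce each quaternion identity to the four scalar identities obtained by reading off the coefficients of the basis elements $\textbf{1},\textbf{i},\textbf{j},\textbf{k}$; each of these is simply the defining recurrence applied at a shifted index. The only structural point to record first is that the real scalars $x^{2}$ and $x$ commute with the quaternionic units, so that both multiplication by $x^{2}$ or $x$ and quaternion addition act componentwise.

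For part (i), I would expand the right-hand side $x^{2}Q_{T,n+2}(x)+xQ_{T,n+1}(x)+Q_{T,n}(x)$ using Definition~\ref{Def:1} and collect the coefficient of each basis element. By the componentwise observation, the coefficient of each of the four basis elements has the form $x^{2}T_{m+2}(x)+xT_{m+1}(x)+T_{m}(x)$, with $m$ running through $n,n+1,n+2,n+3$. The recurrence (\ref{eq:1}) states precisely that this equals $T_{m+3}(x)$, so the four coefficients collapse to $T_{n+3}(x)$, $T_{n+4}(x)$, $T_{n+5}(x)$ and $T_{n+6}(x)$, respectively. Hence the right-hand side equals $T_{n+3}(x)+T_{n+4}(x)\textbf{i}+T_{n+5}(x)\textbf{j}+T_{n+6}(x)\textbf{k}$, which is exactly $Q_{T,n+3}(x)$ by (\ref{eq:5}). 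Part (ii) is proved verbatim, replacing the recurrence (\ref{eq:1}) by (\ref{eq:2}), every $T$ by $t$, and $Q_{T,n}(x)$ by $Q_{t,n}(x)$.

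There is no genuine obstacle in this argument: its entire content is the componentwise linearity noted at the outset combined with the scalar recurrences (\ref{eq:1}) and (\ref{eq:2}). The only point demanding a little care is the index bookkeeping---verifying that shifting the quaternion index by three corresponds to shifting the index in each of the four components by three---but this is purely mechanical, and it is what makes the quaternion polynomials inherit the third-order recurrence of their coefficients.
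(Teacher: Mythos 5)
Your proposal is correct and follows exactly the paper's own argument: expand the right-hand side via Definition~\ref{Def:1}, regroup the coefficients of $\textbf{1},\textbf{i},\textbf{j},\textbf{k}$, and apply the scalar recurrence (\ref{eq:1}) (resp.\ (\ref{eq:2})) at the four shifted indices. No differences worth noting.
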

\begin{proof}
(i) From equations (\ref{eq:1}) and (\ref{eq:5}), we obtain 
\begin{align*}
x^{2}&Q_{T,n+2}(x)+xQ_{T,n+1}(x)+Q_{T,n}(x)\\
&=x^{2}(T_{n+2}(x)+T_{n+3}(x)\textbf{i}+T_{n+4}(x)\textbf{j}+T_{n+5}(x)\textbf{k})\\
&\ \ +x(T_{n+1}(x)+T_{n+2}(x)\textbf{i}+T_{n+3}(x)\textbf{j}+T_{n+4}(x)\textbf{k})\\
&\ \ +T_{n}(x)+T_{n+1}(x)\textbf{i}+T_{n+2}(x)\textbf{j}+T_{n+3}(x)\textbf{k}\\
&=(x^{2}T_{n+2}(x)+xT_{n+1}(x)+T_{n}(x))+(x^{2}T_{n+3}(x)+xT_{n+2}(x)+T_{n+1}(x))\textbf{i}\\
&\ \ +(x^{2}T_{n+4}(x)+xT_{n+3}(x)+T_{n+2}(x))\textbf{j}+(x^{2}T_{n+5}(x)+xT_{n+4}(x)+T_{n+3}(x))\textbf{k}\\
&=T_{n+3}(x)+T_{n+4}(x)\textbf{i}+T_{n+5}(x)\textbf{j}+T_{n+6}(x)\textbf{k}\\
&=Q_{T,n+3}(x).
\end{align*}
(ii) The proof is similar to (i), using the equations (\ref{eq:2}) and (\ref{eq:6}).
\end{proof}

\begin{theorem}[Binet formulas]\label{thm:1}
For $n\geq0$, we have
\begin{equation}\label{eq:7}
T_{n}(x)=\left(
\begin{array}{c}
\frac{\underline{\alpha}\alpha^{n+1}(x)}{(\alpha(x)-\omega_{1}(x))(\alpha(x)-\omega_{2}(x))}-\frac{\underline{\omega_{1}}\omega_{1}^{n+1}(x)}{(\alpha(x)-\omega_{1}(x))(\omega_{1}(x)-\omega_{2}(x))} \\
+\frac{\underline{\omega_{2}}\omega_{2}^{n+1}(x)}{(\alpha(x)-\omega_{2}(x))(\omega_{1}(x)-\omega_{2}(x))}
\end{array}%
\right)
\end{equation}
and
\begin{equation}\label{eq:8}
t_{n}(x)=\underline{\alpha}\alpha^{n}(x)+\underline{\omega_{1}}\omega_{1}^{n}(x)+\underline{\omega_{2}}\omega_{2}^{n}(x),
\end{equation}
where $\underline{\alpha}=1+\alpha(x)\textbf{i}+\alpha^{2}(x)\textbf{j}+\alpha^{3}(x)\textbf{k}$, $\underline{\omega_{1}}=1+\omega_{1}(x)\textbf{i}+\omega_{1}^{2}(x)\textbf{j}+\omega_{1}^{3}(x)\textbf{k}$, $\underline{\omega_{2}}=1+\omega_{2}(x)\textbf{i}+\omega_{2}^{2}(x)\textbf{j}+\omega_{2}^{3}(x)\textbf{k}$ and $\textbf{i}$, $\textbf{j}$, $\textbf{k}$ are quaternion units which satisfy the multiplication rules (\ref{eq:4}).
\end{theorem}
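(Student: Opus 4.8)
The plan is to prove both identities by direct substitution of the scalar Binet formulas recalled in the introduction, exploiting the fact that, by Definition~\ref{Def:1}, each quaternion polynomial is a linear combination of four consecutive scalar polynomials with the fixed quaternion units as coefficients. I would begin by fixing the shorthand $\mathbf{e}_{0}=1$, $\mathbf{e}_{1}=\mathbf{i}$, $\mathbf{e}_{2}=\mathbf{j}$, $\mathbf{e}_{3}=\mathbf{k}$, so that the intended left-hand sides (\ref{eq:5}) and (\ref{eq:6}) read $Q_{T,n}(x)=\sum_{m=0}^{3}T_{n+m}(x)\mathbf{e}_{m}$ and $Q_{t,n}(x)=\sum_{m=0}^{3}t_{n+m}(x)\mathbf{e}_{m}$.

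For the Tribonacci--Lucas case (\ref{eq:8}) I would substitute $t_{n+m}(x)=\alpha^{n+m}(x)+\omega_{1}^{n+m}(x)+\omega_{2}^{n+m}(x)$ into this sum and split it into three groups according to which root appears. In the group coming from $\alpha$, I factor out $\alpha^{n}(x)$ and am left with $\sum_{m=0}^{3}\alpha^{m}(x)\mathbf{e}_{m}=1+\alpha(x)\mathbf{i}+\alpha^{2}(x)\mathbf{j}+\alpha^{3}(x)\mathbf{k}$, which is precisely the quaternion $\underline{\alpha}$ introduced in the statement; the $\omega_{1}$ and $\omega_{2}$ groups produce $\underline{\omega_{1}}$ and $\underline{\omega_{2}}$ in exactly the same way. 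Collecting the three contributions yields $\underline{\alpha}\alpha^{n}(x)+\underline{\omega_{1}}\omega_{1}^{n}(x)+\underline{\omega_{2}}\omega_{2}^{n}(x)$, as required.

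The Tribonacci case (\ref{eq:7}) is handled identically, now using the scalar Binet formula for $T_{n+m}(x)$, which is a sum of three terms with the fixed rational coefficients $1/((\alpha(x)-\omega_{1}(x))(\alpha(x)-\omega_{2}(x)))$ and its analogues. Because these coefficients do not depend on $m$, I can pull each one outside the sum over $m$: the $\alpha$-term then carries $\sum_{m=0}^{3}\alpha^{n+m+1}(x)\mathbf{e}_{m}=\alpha^{n+1}(x)\,\underline{\alpha}$, and similarly for $\omega_{1}$ and $\omega_{2}$. Reassembling the three pieces reproduces exactly the right-hand side of (\ref{eq:7}). (As a cross-check one could instead verify both sides satisfy the recurrence of Proposition~\ref{prop:1} and agree for $n=0,1,2$, but the direct substitution is cleaner and needs no further input.)

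The only point requiring care --- and the step I would flag as the main, if minor, obstacle --- is the factoring of the powers $\alpha^{n}(x),\omega_{1}^{n}(x),\omega_{2}^{n}(x)$ through the quaternionic expression. Since two of the roots are complex, these scalars are genuinely complex numbers, so one must work in the complexified quaternion algebra and invoke that the complex scalars are central there, commuting with $\mathbf{i},\mathbf{j},\mathbf{k}$; this is what legitimizes writing $\alpha^{n}(x)\,\underline{\alpha}=\underline{\alpha}\,\alpha^{n}(x)$ and grouping the $\mathbf{e}_{m}$ terms. Once this convention is fixed, both computations are purely formal rearrangements of finite sums, and the relations (\ref{eq:3}) are not even needed.
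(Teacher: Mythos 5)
Your proposal is correct and follows essentially the same route as the paper: substitute the scalar Binet formulas into Definition~\ref{Def:1}, group the terms by root, factor out the powers of $\alpha(x)$, $\omega_{1}(x)$, $\omega_{2}(x)$, and recognize the quaternions $\underline{\alpha}$, $\underline{\omega_{1}}$, $\underline{\omega_{2}}$. Your added remark about working in the complexified quaternion algebra so that the complex scalars commute with $\textbf{i},\textbf{j},\textbf{k}$ is a point the paper leaves implicit, and you correctly read the stated left-hand sides of (\ref{eq:7}) and (\ref{eq:8}) as $Q_{T,n}(x)$ and $Q_{t,n}(x)$.
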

\begin{proof}
Using the Definition \ref{Def:1} and the Binet formulas for the Tribonacci and Tribonacci-Lucas polynomials, we obtain
\begin{align*}
Q_{T,n}(x)&=T_{n}(x)+T_{n+1}(x)\textbf{i}+T_{n+2}(x)\textbf{j}+T_{n+3}(x)\textbf{k}\\
&=\left(
\begin{array}{c}
\frac{\alpha^{n+1}(x)}{(\alpha(x)-\omega_{1}(x))(\alpha(x)-\omega_{2}(x))}-\frac{\omega_{1}^{n+1}(x)}{(\alpha(x)-\omega_{1}(x))(\omega_{1}(x)-\omega_{2}(x))} \\
+\frac{\omega_{2}^{n+1}(x)}{(\alpha(x)-\omega_{2}(x))(\omega_{1}(x)-\omega_{2}(x))}
\end{array}%
\right)\\
&\ \ +\left(
\begin{array}{c}
\frac{\alpha^{n+2}(x)}{(\alpha(x)-\omega_{1}(x))(\alpha(x)-\omega_{2}(x))}-\frac{\omega_{1}^{n+2}(x)}{(\alpha(x)-\omega_{1}(x))(\omega_{1}(x)-\omega_{2}(x))} \\
+\frac{\omega_{2}^{n+2}(x)}{(\alpha(x)-\omega_{2}(x))(\omega_{1}(x)-\omega_{2}(x))}
\end{array}%
\right)\textbf{i}\\
&\ \ +\left(
\begin{array}{c}
\frac{\alpha^{n+3}(x)}{(\alpha(x)-\omega_{1}(x))(\alpha(x)-\omega_{2}(x))}-\frac{\omega_{1}^{n+3}(x)}{(\alpha(x)-\omega_{1}(x))(\omega_{1}(x)-\omega_{2}(x))} \\
+\frac{\omega_{2}^{n+3}(x)}{(\alpha(x)-\omega_{2}(x))(\omega_{1}(x)-\omega_{2}(x))}
\end{array}%
\right)\textbf{j}\\
&\ \ +\left(
\begin{array}{c}
\frac{\alpha^{n+4}(x)}{(\alpha(x)-\omega_{1}(x))(\alpha(x)-\omega_{2}(x))}-\frac{\omega_{1}^{n+4}(x)}{(\alpha(x)-\omega_{1}(x))(\omega_{1}(x)-\omega_{2}(x))} \\
+\frac{\omega_{2}^{n+4}(x)}{(\alpha(x)-\omega_{2}(x))(\omega_{1}(x)-\omega_{2}(x))}
\end{array}%
\right)\textbf{k}\\
&=\left(
\begin{array}{c}
\frac{\alpha^{n+1}(x)}{(\alpha(x)-\omega_{1}(x))(\alpha(x)-\omega_{2}(x))}(1+\alpha(x)\textbf{i}+\alpha^{2}(x)\textbf{j}+\alpha^{3}(x)\textbf{k})\\
-\frac{\omega_{1}^{n+1}(x)}{(\alpha(x)-\omega_{1}(x))(\omega_{1}(x)-\omega_{2}(x))}(1+\omega_{1}(x)\textbf{i}+\omega_{1}^{2}(x)\textbf{j}+\omega_{1}^{3}(x)\textbf{k}) \\
+\frac{\omega_{2}^{n+1}(x)}{(\alpha(x)-\omega_{2}(x))(\omega_{1}(x)-\omega_{2}(x))}(1+\omega_{2}(x)\textbf{i}+\omega_{2}^{2}(x)\textbf{j}+\omega_{2}^{3}(x)\textbf{k})
\end{array}%
\right)\\
&=\left(
\begin{array}{c}
\frac{\underline{\alpha}\alpha^{n+1}(x)}{(\alpha(x)-\omega_{1}(x))(\alpha(x)-\omega_{2}(x))}-\frac{\underline{\omega_{1}}\omega_{1}^{n+1}(x)}{(\alpha(x)-\omega_{1}(x))(\omega_{1}(x)-\omega_{2}(x))} \\
+\frac{\underline{\omega_{2}}\omega_{2}^{n+1}(x)}{(\alpha(x)-\omega_{2}(x))(\omega_{1}(x)-\omega_{2}(x))}
\end{array}%
\right)
\end{align*}
and
\begin{align*}
Q_{t,n}(x)&=t_{n}(x)+t_{n+1}(x)\textbf{i}+t_{n+2}(x)\textbf{j}+t_{n+3}(x)\textbf{k}\\
&=\alpha^{n}(x)+\omega_{1}^{n}(x)+\omega_{2}^{n}(x)+(\alpha^{n+1}(x)+\omega_{1}^{n+1}(x)+\omega_{2}^{n+1}(x))\textbf{i}\\
&\ \ +(\alpha^{n+2}(x)+\omega_{1}^{n+2}(x)+\omega_{2}^{n+2}(x))\textbf{j}+(\alpha^{n+3}(x)+\omega_{1}^{n+3}(x)+\omega_{2}^{n+3}(x))\textbf{k}\\
&=\alpha^{n}(x)(1+\alpha(x)\textbf{i}+\alpha^{2}(x)\textbf{j}+\alpha^{3}(x)\textbf{k})+\omega_{1}^{n}(x)(1+\omega_{1}(x)\textbf{i}+\omega_{1}^{2}(x)\textbf{j}+\omega_{1}^{3}(x)\textbf{k})\\
&\ \ +\omega_{2}^{n}(x)(1+\omega_{2}(x)\textbf{i}+\omega_{2}^{2}(x)\textbf{j}+\omega_{2}^{3}(x)\textbf{k})\\
&=\underline{\alpha}\alpha^{n}(x)+\underline{\omega_{1}}\omega_{1}^{n}(x)+\underline{\omega_{2}}\omega_{2}^{n}(x).
\end{align*}
\end{proof}

\begin{theorem}\label{thm:2}
The generating functions for the Tribonacci and Tribonacci-Lucas quaternion polynomials are
$$G_{T}(y)=\frac{y+\textbf{i}+(x^{2}+xy+y^{2})\textbf{j}+(x^{4}+x+x^{3}y+y+x^{2}y^{2})\textbf{k}}{1-x^{2}y-xy^{2}-y^{3}}$$ and $$g_{t}(y)=\frac{\left(
\begin{array}{c}
3-2x^{2}y-xy^{2}+(x^{2}+2xy+3y^{2})\textbf{i}+(x^{4}+2x+x^{3}y+3y+x^{2}y^{2})\textbf{j}\\
+(x^{6}+3x^{3}+3+x^{5}y+3x^{2}y+x^{4}y^{2}+2xy^{2})\textbf{k}
\end{array}%
\right)}{1-x^{2}y-xy^{2}-y^{3}},$$ respectively.
\end{theorem}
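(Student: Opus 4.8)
The plan is to treat $G_{T}(y)=\sum_{n\geq0}Q_{T,n}(x)y^{n}$ as a formal power series with quaternion coefficients and to annihilate its tail by multiplying with the characteristic polynomial $1-x^{2}y-xy^{2}-y^{3}$. Since $y$ is a commuting indeterminate and the quaternionic units occur only inside the coefficients, every manipulation below takes place in the ring $\mathbb{H}[[y]]$ of formal power series in $y$ with quaternion coefficients, so associativity and distributivity apply without the noncommutativity of $\mathbb{H}$ ever being invoked.

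First I would write out the three shifted series
$$x^{2}yG_{T}(y)=\sum_{n\geq1}x^{2}Q_{T,n-1}(x)y^{n},\quad xy^{2}G_{T}(y)=\sum_{n\geq2}xQ_{T,n-2}(x)y^{n},\quad y^{3}G_{T}(y)=\sum_{n\geq3}Q_{T,n-3}(x)y^{n},$$
and subtract them from $G_{T}(y)$. For every $n\geq3$ the coefficient of $y^{n}$ in $(1-x^{2}y-xy^{2}-y^{3})G_{T}(y)$ equals $Q_{T,n}(x)-x^{2}Q_{T,n-1}(x)-xQ_{T,n-2}(x)-Q_{T,n-3}(x)$, which vanishes by Proposition \ref{prop:1}(i) after the index shift $n\mapsto n-3$. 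Hence only the contributions from $n=0,1,2$ survive, yielding
$$(1-x^{2}y-xy^{2}-y^{3})G_{T}(y)=Q_{T,0}(x)+\big(Q_{T,1}(x)-x^{2}Q_{T,0}(x)\big)y+\big(Q_{T,2}(x)-x^{2}Q_{T,1}(x)-xQ_{T,0}(x)\big)y^{2}.$$

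The remaining work is purely computational. Using $T_{0}(x)=0$, $T_{1}(x)=1$, $T_{2}(x)=x^{2}$ together with (\ref{eq:1}), I would first record $T_{3}(x)=x^{4}+x$, $T_{4}(x)=x^{6}+2x^{3}+1$ and $T_{5}(x)=x^{8}+3x^{5}+3x^{2}$, then read off $Q_{T,0}(x)$, $Q_{T,1}(x)$, $Q_{T,2}(x)$ from (\ref{eq:5}). Substituting these into the displayed polynomial and collecting the real, $\textbf{i}$, $\textbf{j}$, $\textbf{k}$ parts separately produces exactly the stated numerator; dividing by $1-x^{2}y-xy^{2}-y^{3}$ gives $G_{T}(y)$. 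The Tribonacci--Lucas case is identical in structure: Proposition \ref{prop:1}(ii) again kills the tail for $n\geq3$, and one substitutes the initial data $t_{0}(x)=3$, $t_{1}(x)=x^{2}$, $t_{2}(x)=x^{4}+2x$, $t_{3}(x)=x^{6}+3x^{3}+3$, $t_{4}(x)=x^{8}+4x^{5}+6x^{2}$, $t_{5}(x)=x^{10}+5x^{7}+10x^{4}+5x$ into the analogous three-term polynomial to obtain $g_{t}(y)$.

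There is no genuine analytic obstacle, since convergence is irrelevant for a formal power series identity. The only place to exercise care is the bookkeeping of the degree-two coefficient: its $\textbf{j}$- and $\textbf{k}$-components involve the higher initial polynomials $T_{4},T_{5}$ (respectively $t_{4},t_{5}$), and several monomials must cancel before the compact expressions $x^{2}+xy+y^{2}$ and $x^{4}+x+x^{3}y+y+x^{2}y^{2}$ emerge. Verifying these cancellations component by component is the main, if routine, effort.
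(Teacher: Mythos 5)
Your proposal is correct and follows exactly the paper's own argument: multiply $G_{T}(y)$ (resp. $g_{t}(y)$) by $1-x^{2}y-xy^{2}-y^{3}$, invoke Proposition \ref{prop:1} to annihilate all coefficients of $y^{n}$ for $n\geq3$, and substitute the initial quaternions $Q_{T,0}(x),Q_{T,1}(x),Q_{T,2}(x)$ (resp. $Q_{t,0}(x),Q_{t,1}(x),Q_{t,2}(x)$) into the surviving degree-$\leq2$ part. Your explicitly recorded values $T_{3}(x)=x^{4}+x$, $T_{4}(x)=x^{6}+2x^{3}+1$, $T_{5}(x)=x^{8}+3x^{5}+3x^{2}$ and $t_{3}(x)=x^{6}+3x^{3}+3$, $t_{4}(x)=x^{8}+4x^{5}+6x^{2}$, $t_{5}(x)=x^{10}+5x^{7}+10x^{4}+5x$ are all correct and reproduce the stated numerators.
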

\begin{proof}
Let $G_{T}(y)=\sum_{n=0}^{\infty}Q_{T,n}(x)y^{n}$ and $g_{t}(y)=\sum_{n=0}^{\infty}Q_{t,n}(x)y^{n}$. Then we get
the following equation
\begin{align*}
(1-&x^{2}y-xy^{2}-y^{3})G_{T}(y)\\
&=Q_{T,0}(x)+(Q_{T,1}(x)-x^{2}Q_{T,0}(x))y+(Q_{T,2}(x)-x^{2}Q_{T,1}(x)-xQ_{T,0}(x))y^{2}\\
&\ \ +\sum_{n=3}^{\infty}(Q_{T,n}(x)-x^{2}Q_{T,n-1}(x)-xQ_{T,n-2}(x)-Q_{T,n-3}(x))y^{n}.
\end{align*}

Since, for each $n\geq3$, the coefficient of $y^{n}$ is zero in the right-hand side of this equation, we obtain 
\begin{align*}
G_{T}(y)&=\frac{Q_{T,0}(x)+(Q_{T,1}(x)-x^{2}Q_{T,0}(x))y+(Q_{T,2}(x)-x^{2}Q_{T,1}(x)-xQ_{T,0}(x))y^{2}}{1-x^{2}y-xy^{2}-y^{3}}\\
&=\frac{y+\textbf{i}+(x^{2}+xy+y^{2})\textbf{j}+(x^{4}+x+x^{3}y+y+x^{2}y^{2})\textbf{k}}{1-x^{2}y-xy^{2}-y^{3}}.
\end{align*}
Similarly, we get 
\begin{align*}
g_{t}(y)&=\frac{Q_{t,0}(x)+(Q_{t,1}(x)-x^{2}Q_{t,0}(x))y+(Q_{t,2}(x)-x^{2}Q_{t,1}(x)-xQ_{t,0}(x))y^{2}}{1-x^{2}y-xy^{2}-y^{3}}\\
&=\frac{\left(
\begin{array}{c}
3-2x^{2}y-xy^{2}+(x^{2}+2xy+3y^{2})\textbf{i}+(x^{4}+2x+x^{3}y+3y+x^{2}y^{2})\textbf{j}\\
+(x^{6}+3x^{3}+3+x^{5}y+3x^{2}y+x^{4}y^{2}+2xy^{2})\textbf{k}
\end{array}%
\right)}{1-x^{2}y-xy^{2}-y^{3}}.
\end{align*}
\end{proof}

\begin{theorem}\label{thm:3}
For $m\geq 2$, the generating functions for the Tribonacci quaternion polynomial $\{Q_{T,n+m}(x)\}_{n\geq0}$ and the Tribonaci-Lucas quaternion polynomial $\{Q_{t,n+m}(x)\}_{n\geq0}$ are $$\sum_{n=0}^{\infty}Q_{T,n+m}(x)y^{n}=\frac{Q_{T,m}(x)+(xQ_{T,m-1}(x)+Q_{T,m-2})y+Q_{T,m-1}(x)y^{2}}{1-x^{2}y-xy^{2}-y^{3}}$$ and $$\sum_{n=0}^{\infty}Q_{t,n+m}(x)y^{n}=\frac{Q_{t,m}(x)+(xQ_{t,m-1}(x)+Q_{t,m-2})y+Q_{t,m-1}(x)y^{2}}{1-x^{2}y-xy^{2}-y^{3}}.$$
\end{theorem}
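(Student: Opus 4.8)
The plan is to mimic the argument of Theorem \ref{thm:2}, but applied to the shifted sequence and with one extra reduction step that uses the hypothesis $m \geq 2$. I will carry out the details for the Tribonacci quaternion polynomials; the Tribonacci-Lucas case is verbatim identical because, by Proposition \ref{prop:1}, both families satisfy the same third-order recurrence $Q_{T,k+3}(x)=x^{2}Q_{T,k+2}(x)+xQ_{T,k+1}(x)+Q_{T,k}(x)$ (and likewise for $Q_{t,\cdot}$) for every $k\geq0$.

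First I would set $S(y)=\sum_{n=0}^{\infty}Q_{T,n+m}(x)y^{n}$ and multiply by $1-x^{2}y-xy^{2}-y^{3}$. After shifting the summation index in each of the three subtracted series, the coefficient of $y^{n}$ for $n\geq3$ becomes
$$Q_{T,n+m}(x)-x^{2}Q_{T,n+m-1}(x)-xQ_{T,n+m-2}(x)-Q_{T,n+m-3}(x),$$
which vanishes by Proposition \ref{prop:1}(i) (setting $k=n+m-3\geq0$). Hence only the $y^{0}$, $y^{1}$, $y^{2}$ terms survive, leaving
$$(1-x^{2}y-xy^{2}-y^{3})S(y)=Q_{T,m}(x)+\bigl(Q_{T,m+1}(x)-x^{2}Q_{T,m}(x)\bigr)y+\bigl(Q_{T,m+2}(x)-x^{2}Q_{T,m+1}(x)-xQ_{T,m}(x)\bigr)y^{2}.$$

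The key step is then to rewrite the two bracketed coefficients in the claimed form. Applying Proposition \ref{prop:1}(i) with $k=m-2$ gives $Q_{T,m+1}(x)=x^{2}Q_{T,m}(x)+xQ_{T,m-1}(x)+Q_{T,m-2}(x)$, so the coefficient of $y$ collapses to $xQ_{T,m-1}(x)+Q_{T,m-2}(x)$; applying it with $k=m-1$ gives $Q_{T,m+2}(x)=x^{2}Q_{T,m+1}(x)+xQ_{T,m}(x)+Q_{T,m-1}(x)$, so the coefficient of $y^{2}$ collapses to $Q_{T,m-1}(x)$. This is exactly where the hypothesis $m\geq2$ enters: it guarantees that the indices $m-1$ and $m-2$ are nonnegative, so that these two backward instances of the recurrence are legitimate. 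Dividing by $1-x^{2}y-xy^{2}-y^{3}$ then yields the stated generating function for $\{Q_{T,n+m}(x)\}_{n\geq0}$.

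Finally I would repeat the whole computation with $Q_{t,n}(x)$ in place of $Q_{T,n}(x)$, using Proposition \ref{prop:1}(ii) both to annihilate the tail for $n\geq3$ and to reduce the quadratic numerator, which produces the second formula. I do not anticipate any genuine difficulty here; the only point requiring care is the index bookkeeping in the backward application of the recurrence, and this is precisely what forces the restriction $m\geq2$.
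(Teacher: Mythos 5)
Your proof is correct, but it takes a genuinely different route from the paper's. The paper proves this theorem by invoking the Binet formula of Theorem \ref{thm:1}: it writes $Q_{t,n+m}(x)=\underline{\alpha}\alpha^{n+m}(x)+\underline{\omega_{1}}\omega_{1}^{n+m}(x)+\underline{\omega_{2}}\omega_{2}^{n+m}(x)$, sums the three resulting geometric series to get $\underline{\alpha}\alpha^{m}(x)/(1-\alpha(x)y)+\cdots$, and then recombines over the common denominator $(1-\alpha(x)y)(1-\omega_{1}(x)y)(1-\omega_{2}(x)y)=1-x^{2}y-xy^{2}-y^{3}$, implicitly using the symmetric-function relations (\ref{eq:3}) and the recurrence to identify the numerator coefficients (a step the paper leaves entirely to the reader, and it only displays the Tribonacci-Lucas case). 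You instead work directly with the recurrence of Proposition \ref{prop:1}, multiplying the shifted series by $1-x^{2}y-xy^{2}-y^{3}$, killing the tail for $n\geq3$, and then applying the recurrence backwards at $k=m-2$ and $k=m-1$ to collapse the $y$ and $y^{2}$ coefficients to $xQ_{T,m-1}(x)+Q_{T,m-2}(x)$ and $Q_{T,m-1}(x)$; both reductions check out. Your approach buys two things: it is more elementary (purely formal power series, no roots or convergence considerations), and it makes completely explicit where the hypothesis $m\geq2$ is used, which the paper's argument obscures. The paper's approach buys brevity once the Binet machinery of Theorems \ref{thm:1} and \ref{thm:2} is already established. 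Either is acceptable; yours is arguably the more complete writeup.
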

\begin{proof}
Using Theorem \ref{thm:1} and Theorem \ref{thm:2}, we obtain 
\begin{align*}
\sum_{n=0}^{\infty}&Q_{t,n+m}(x)y^{n}\\
&=\sum_{n=0}^{\infty}(\underline{\alpha}\alpha^{n+m}(x)+\underline{\omega_{1}}\omega_{1}^{n+m}(x)+\underline{\omega_{2}}\omega_{2}^{n+m}(x))y^{n}\\
&=\underline{\alpha}\alpha^{m}(x)\sum_{n=0}^{\infty}(\alpha(x)y)^{n}+\underline{\omega_{1}}\omega_{1}^{m}(x)\sum_{n=0}^{\infty}(\omega_{1}(x)y)^{n}+\underline{\omega_{2}}\omega_{2}^{m}(x)\sum_{n=0}^{\infty}(\omega_{2}(x)y)^{n}\\
&=\underline{\alpha}\alpha^{m}(x)\frac{1}{1-\alpha(x)y}+\underline{\omega_{1}}\omega_{1}^{m}(x)\frac{1}{1-\omega_{1}(x)y}+\underline{\omega_{2}}\omega_{2}^{m}(x)\frac{1}{1-\omega_{2}(x)y}\\
&=\frac{Q_{t,m}(x)+(xQ_{t,m-1}(x)+Q_{t,m-2})y+Q_{t,m-1}(x)y^{2}}{1-x^{2}y-xy^{2}-y^{3}}.
\end{align*}
\end{proof}

\begin{theorem}\label{thm:4}
For $n\in \Bbb{N}$, the exponential generating functions for the Tribonacci and Tribonacci-Lucas quaternion polynomials are 
\begin{align*}
\sum_{n=0}^{\infty}\frac{Q_{T,n}}{n!}y^{n}&=\frac{\underline{\alpha}\alpha(x)e^{\alpha(x)y}}{(\alpha(x)-\omega_{1}(x))(\alpha(x)-\omega_{2}(x))}-\frac{\underline{\omega_{1}}\omega_{1}(x)e^{\omega_{1}(x)y}}{(\alpha(x)-\omega_{1}(x))(\omega_{1}(x)-\omega_{2}(x))}\\
&\ \ +\frac{\underline{\omega_{2}}\omega_{2}(x)e^{\omega_{2}(x)y}}{(\alpha(x)-\omega_{2}(x))(\omega_{1}(x)-\omega_{2}(x))}
\end{align*}
 and $$\sum_{n=0}^{\infty}\frac{Q_{t,n}}{n!}y^{n}=\underline{\alpha}e^{\alpha(x)y}+\underline{\omega_{1}}e^{\omega_{1}(x)y}+\underline{\omega_{2}}e^{\omega_{2}(x)y},$$ respectively, where $\underline{\alpha}=1+\alpha(x)\textbf{i}+\alpha^{2}(x)\textbf{j}+\alpha^{3}(x)\textbf{k}$, $\underline{\omega_{1}}=1+\omega_{1}(x)\textbf{i}+\omega_{1}^{2}(x)\textbf{j}+\omega_{1}^{3}(x)\textbf{k}$, $\underline{\omega_{2}}=1+\omega_{2}(x)\textbf{i}+\omega_{2}^{2}(x)\textbf{j}+\omega_{2}^{3}(x)\textbf{k}$ and $\textbf{i}$, $\textbf{j}$, $\textbf{k}$ are quaternion units which satisfy the multiplication rules (\ref{eq:4}).
\end{theorem}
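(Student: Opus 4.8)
The plan is to obtain both exponential generating functions directly from the Binet formulas of Theorem \ref{thm:1}, since those formulas already express each quaternion polynomial as a fixed quaternion coefficient times a pure power of one of the roots $\alpha(x)$, $\omega_{1}(x)$, $\omega_{2}(x)$. This is exactly the strategy used to pass from Binet to the ordinary generating function in the proof of Theorem \ref{thm:3}; the only change is that the geometric sum $\sum_{n\ge0}(\omega(x)y)^{n}$ is replaced by the exponential sum $\sum_{n\ge0}(\omega(x)y)^{n}/n!$, which I will recognize as a Maclaurin series for $e^{\omega(x)y}$.

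I would treat the Tribonacci-Lucas case first, as it is the cleaner of the two. Substituting \eqref{eq:8} into the defining series gives
\begin{align*}
\sum_{n=0}^{\infty}\frac{Q_{t,n}(x)}{n!}y^{n}
&=\sum_{n=0}^{\infty}\frac{\underline{\alpha}\alpha^{n}(x)+\underline{\omega_{1}}\omega_{1}^{n}(x)+\underline{\omega_{2}}\omega_{2}^{n}(x)}{n!}y^{n}.
\end{align*}
Since the quaternions $\underline{\alpha}$, $\underline{\omega_{1}}$, $\underline{\omega_{2}}$ do not depend on $n$, I split the series into three pieces and factor each quaternion out to the left, obtaining
\begin{align*}
\sum_{n=0}^{\infty}\frac{Q_{t,n}(x)}{n!}y^{n}
&=\underline{\alpha}\sum_{n=0}^{\infty}\frac{(\alpha(x)y)^{n}}{n!}+\underline{\omega_{1}}\sum_{n=0}^{\infty}\frac{(\omega_{1}(x)y)^{n}}{n!}+\underline{\omega_{2}}\sum_{n=0}^{\infty}\frac{(\omega_{2}(x)y)^{n}}{n!},
\end{align*}
and recognizing each inner sum as $e^{\omega(x)y}$ yields the stated formula at once. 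The Tribonacci case is identical in spirit: starting from \eqref{eq:7}, the exponent $n+1$ on each root contributes one extra factor of the root, so that after pulling out the constant quaternion and the extra factor $\omega(x)$ one is again left with $\sum_{n\ge0}(\omega(x)y)^{n}/n!=e^{\omega(x)y}$. This produces precisely the numerators $\underline{\alpha}\alpha(x)e^{\alpha(x)y}$, $\underline{\omega_{1}}\omega_{1}(x)e^{\omega_{1}(x)y}$, $\underline{\omega_{2}}\omega_{2}(x)e^{\omega_{2}(x)y}$ over the same scalar denominators appearing in \eqref{eq:7}.

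The computation is essentially routine, so there is no deep obstacle; the points deserving a word of care are bookkeeping rather than substance. First, the interchange implicit in splitting the series and summing term by term must be justified, but since $\alpha(x)$, $\omega_{1}(x)$, $\omega_{2}(x)$ are fixed scalars the three exponential series converge for every $y$, so the manipulation is legitimate throughout. Second, although $\underline{\alpha}$, $\underline{\omega_{1}}$, $\underline{\omega_{2}}$ are genuine quaternions, all the remaining factors $\alpha^{n}(x)$, $y^{n}$, $1/n!$ and the denominators $(\alpha(x)-\omega_{1}(x))(\alpha(x)-\omega_{2}(x))$, etc.\ are scalars and hence commute with these quaternions; consequently no ordering ambiguity arises when a quaternion is factored out of a sum, and the non-commutativity of the quaternion algebra plays no role in the argument.
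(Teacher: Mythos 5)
Your proposal is correct and follows essentially the same route as the paper's own proof: substitute the Binet formulas of Theorem \ref{thm:1} into the series, split into three parts, pull the constant quaternion coefficients (and, in the Tribonacci case, one extra factor of each root) out front, and identify each remaining sum as an exponential series. Your added remarks on convergence and on the scalars commuting with the quaternion coefficients are sound but do not change the argument.
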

\begin{proof}
By considering the Binet formulas for the Tribonacci and Tribonacci-Lucas quaternion polynomials given in Theorem \ref{thm:1}, we get
\begin{align*}
\sum_{n=0}^{\infty}\frac{Q_{T,n}}{n!}y^{n}&=\sum_{n=0}^{\infty}\left(
\begin{array}{c}
\frac{\underline{\alpha}\alpha^{n+1}(x)}{(\alpha(x)-\omega_{1}(x))(\alpha(x)-\omega_{2}(x))}-\frac{\underline{\omega_{1}}\omega_{1}^{n+1}(x)}{(\alpha(x)-\omega_{1}(x))(\omega_{1}(x)-\omega_{2}(x))} \\
+\frac{\underline{\omega_{2}}\omega_{2}^{n+1}(x)}{(\alpha(x)-\omega_{2}(x))(\omega_{1}(x)-\omega_{2}(x))}
\end{array}%
\right)\frac{y^{n}}{n!}\\
&=\frac{\underline{\alpha}\alpha(x)}{(\alpha(x)-\omega_{1}(x))(\alpha(x)-\omega_{2}(x))}\sum_{n=0}^{\infty}\frac{(\alpha(x)y)^{n}}{n!}\\
&\ \ -\frac{\underline{\omega_{1}}\omega_{1}(x)}{(\alpha(x)-\omega_{1}(x))(\omega_{1}(x)-\omega_{2}(x))}\sum_{n=0}^{\infty}\frac{(\omega_{1}(x)y)^{n}}{n!}\\
&\ \ +\frac{\underline{\omega_{2}}\omega_{2}(x)}{(\alpha(x)-\omega_{2}(x))(\omega_{1}(x)-\omega_{2}(x))}\sum_{n=0}^{\infty}\frac{(\omega_{2}(x)y)^{n}}{n!}\\
&=\frac{\underline{\alpha}\alpha(x)e^{\alpha(x)y}}{(\alpha(x)-\omega_{1}(x))(\alpha(x)-\omega_{2}(x))}-\frac{\underline{\omega_{1}}\omega_{1}(x)e^{\omega_{1}(x)y}}{(\alpha(x)-\omega_{1}(x))(\omega_{1}(x)-\omega_{2}(x))}\\
&\ \ +\frac{\underline{\omega_{2}}\omega_{2}(x)e^{\omega_{2}(x)y}}{(\alpha(x)-\omega_{2}(x))(\omega_{1}(x)-\omega_{2}(x))}
\end{align*}
and 
\begin{align*}
\sum_{n=0}^{\infty}\frac{Q_{t,n}}{n!}y^{n}&=\sum_{n=0}^{\infty}(\underline{\alpha}\alpha^{n}(x)+\underline{\omega_{1}}\omega_{1}^{n}(x)+\underline{\omega_{2}}\omega_{2}^{n}(x))\frac{y^{n}}{n!}\\
&=\underline{\alpha}\sum_{n=0}^{\infty}\frac{(\alpha(x)y)^{n}}{n!}+\underline{\omega_{1}}\sum_{n=0}^{\infty}\frac{(\omega_{1}y)^{n}}{n!}+\underline{\omega_{2}}\sum_{n=0}^{\infty}\frac{(\omega_{2}y)^{n}}{n!}\\
&=\underline{\alpha}e^{\alpha(x)y}+\underline{\omega_{1}}e^{\omega_{1}(x)y}+\underline{\omega_{2}}e^{\omega_{2}(x)y}.
\end{align*}
\end{proof}
\begin{theorem}
For $n\geq0$ and related with Tribonacci and Tribonacci-Lucas quaternion polynomials, we have $$\sum_{r=0}^{n}\sum_{s=0}^{r}\binom{n}{r} \binom{r}{s} x^{r+s}Q_{T,r+s}(x)=Q_{T,3n}(x)$$ and $$\sum_{r=0}^{n}\sum_{s=0}^{r}\binom{n}{r} \binom{r}{s} x^{r+s}Q_{t,r+s}(x)=Q_{t,3n}(x),$$ respectively.
\end{theorem}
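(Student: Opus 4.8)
The plan is to collapse the double binomial sum into a single power via two applications of the binomial theorem, recognize that power as $\lambda^{3n}$ through the characteristic equation, and then reassemble the right-hand side using the Binet formulas of Theorem \ref{thm:1}. The key algebraic fact is that every root $\lambda\in\{\alpha(x),\omega_{1}(x),\omega_{2}(x)\}$ of $\lambda^{3}-x^{2}\lambda^{2}-x\lambda-1=0$ satisfies
$$1+x\lambda+x^{2}\lambda^{2}=\lambda^{3}.$$

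First I would establish the scalar auxiliary identity
$$\sum_{r=0}^{n}\sum_{s=0}^{r}\binom{n}{r}\binom{r}{s}(x\lambda)^{r+s}=\lambda^{3n},$$
valid for each such root $\lambda$. Holding $r$ fixed and summing over $s$ by the binomial theorem gives $\sum_{s=0}^{r}\binom{r}{s}(x\lambda)^{s}=(1+x\lambda)^{r}$; after pulling out the factor $(x\lambda)^{r}$ one obtains $\sum_{r=0}^{n}\binom{n}{r}\big(x\lambda(1+x\lambda)\big)^{r}=(1+x\lambda+x^{2}\lambda^{2})^{n}$, and the displayed characteristic relation turns this into $(\lambda^{3})^{n}=\lambda^{3n}$.

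For the Tribonacci--Lucas identity I would insert the Binet formula (\ref{eq:8}), namely $Q_{t,r+s}(x)=\underline{\alpha}\alpha^{r+s}(x)+\underline{\omega_{1}}\omega_{1}^{r+s}(x)+\underline{\omega_{2}}\omega_{2}^{r+s}(x)$, into the left-hand side. Since the quaternion coefficients $\underline{\alpha},\underline{\omega_{1}},\underline{\omega_{2}}$ do not depend on $r,s$ and the sums are finite, they factor out of the rearranged double sum, leaving three copies of the auxiliary identity with $\lambda=\alpha(x),\omega_{1}(x),\omega_{2}(x)$ respectively. Applying it to each root yields $\underline{\alpha}\alpha^{3n}(x)+\underline{\omega_{1}}\omega_{1}^{3n}(x)+\underline{\omega_{2}}\omega_{2}^{3n}(x)$, which is exactly $Q_{t,3n}(x)$ by (\ref{eq:8}). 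The Tribonacci identity is handled in the same way using (\ref{eq:7}): each summand now carries the exponent $r+s+1$ instead of $r+s$, so I would write $\lambda^{r+s+1}=\lambda\cdot\lambda^{r+s}$, absorb the extra $\lambda$ and the constant denominator $(\alpha(x)-\omega_{1}(x))(\alpha(x)-\omega_{2}(x))$ (and its two analogues) into the scalar prefactor, and apply the auxiliary identity to the remaining $\sum_{r=0}^{n}\sum_{s=0}^{r}\binom{n}{r}\binom{r}{s}(x\lambda)^{r+s}=\lambda^{3n}$. This produces $\lambda^{3n+1}$ in each of the three fractions, which recombine into the Binet expression (\ref{eq:7}) for $Q_{T,3n}(x)$.

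No serious obstacle is expected: the argument is elementary once one spots that the double sum collapses to $(1+x\lambda+x^{2}\lambda^{2})^{n}$. The only points needing care are purely bookkeeping — confirming that $1+x\lambda+x^{2}\lambda^{2}$ coincides with the right-hand side $x^{2}\lambda^{2}+x\lambda+1$ of the characteristic recurrence, and checking that interchanging the finite order of summation with the factoring of the scalar coefficients $\underline{\alpha},\underline{\omega_{1}},\underline{\omega_{2}}$ is legitimate, which it is because all sums are finite.
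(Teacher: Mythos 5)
Your proposal is correct and follows essentially the same route as the paper: substitute the Binet formulas, apply the binomial theorem twice to collapse the double sum to $(1+x\lambda+x^{2}\lambda^{2})^{n}$, and use the characteristic equation to identify this with $\lambda^{3n}$ for each root. The only cosmetic difference is that you isolate the scalar identity as an explicit auxiliary lemma, whereas the paper carries out the same computation inline for each of the three roots.
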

\begin{proof}
By the Binet formulas for the Tribonacci and Tribonacci-Lucas quaternion polynomials, we have
\begin{align*}
\sum_{r=0}^{n}\sum_{s=0}^{r}&\binom{n}{r} \binom{r}{s} x^{r+s}Q_{T,r+s}(x)\\
&=\sum_{r=0}^{n}\sum_{s=0}^{r}\binom{n}{r} \binom{r}{s} x^{r+s}\left(\frac{\underline{\alpha}\alpha^{r+s+1}(x)}{(\alpha(x)-\omega_{1}(x))(\alpha(x)-\omega_{2}(x))}
\right)\\
&\ \ - \sum_{r=0}^{n}\sum_{s=0}^{r}\binom{n}{r} \binom{r}{s} x^{r+s}\left(\frac{\underline{\omega_{1}}\omega_{1}^{r+s+1}(x)}{(\alpha(x)-\omega_{1}(x))(\omega_{1}(x)-\omega_{2}(x))}\right)\\
&\ \ + \sum_{r=0}^{n}\sum_{s=0}^{r}\binom{n}{r} \binom{r}{s} x^{r+s}\left(\frac{\underline{\omega_{2}}\omega_{2}^{r+s+1}(x)}{(\alpha(x)-\omega_{2}(x))(\omega_{1}(x)-\omega_{2}(x))}
\right)\\
&=\frac{\underline{\alpha}\alpha(x)}{(\alpha(x)-\omega_{1}(x))(\alpha(x)-\omega_{2}(x))}\sum_{r=0}^{n}\binom{n}{r}(x\alpha(x)+x^{2}\alpha^{2}(x))^{r}\\
&\ \ - \frac{\underline{\omega_{1}}\omega_{1}(x)}{(\alpha(x)-\omega_{1}(x))(\omega_{1}(x)-\omega_{2}(x))}\sum_{r=0}^{n}\binom{n}{r}(x\omega_{1}(x)+x^{2}\omega_{1}^{2}(x))^{r}\\
&\ \ +\frac{\underline{\omega_{2}}\omega_{2}(x)}{(\alpha(x)-\omega_{2}(x))(\omega_{1}(x)-\omega_{2}(x))}\sum_{r=0}^{n}\binom{n}{r}(x\omega_{2}(x)+x^{2}\omega_{2}^{2}(x))^{r}\\
&=\frac{\underline{\alpha}\alpha^{3n+1}(x)}{(\alpha(x)-\omega_{1}(x))(\alpha(x)-\omega_{2}(x))}- \frac{\underline{\omega_{1}}\omega_{1}^{3n+1}(x)}{(\alpha(x)-\omega_{1}(x))(\omega_{1}(x)-\omega_{2}(x))}\\
&\ \ +\frac{\underline{\omega_{2}}\omega_{2}^{3n+1}(x)}{(\alpha(x)-\omega_{2}(x))(\omega_{1}(x)-\omega_{2}(x))}\\
&=Q_{T,3n}(x)
\end{align*}
and
\begin{align*}
\sum_{r=0}^{n}\sum_{s=0}^{r}&\binom{n}{r} \binom{r}{s} x^{r+s}Q_{t,r+s}(x)\\
&=\sum_{r=0}^{n}\sum_{s=0}^{r}\binom{n}{r} \binom{r}{s} x^{r+s}\left(\underline{\alpha}\alpha^{r+s}(x)+\underline{\omega_{1}}\omega_{1}^{r+s}(x)+ \underline{\omega_{2}}\omega_{2}^{r+s}(x)\right)\\
&=\underline{\alpha}\sum_{r=0}^{n}\binom{n}{r}(x\alpha(x)+x^{2}\alpha^{2}(x))^{r}+\underline{\omega_{1}}\sum_{r=0}^{n}\binom{n}{r}(x\omega_{1}(x)+x^{2}\omega_{1}^{2}(x))^{r}\\
&\ \ + \underline{\omega_{2}}\sum_{r=0}^{n}\binom{n}{r}(x\omega_{2}(x)+x^{2}\omega_{2}^{2}(x))^{r}\\
&=\underline{\alpha} \alpha^{3n}(x)+\underline{\omega_{1}}\omega_{1}^{3n}(x)+\underline{\omega_{2}}\omega_{2}^{3n}(x)\\
&=Q_{t,3n}(x).
\end{align*}
\end{proof}

\begin{theorem}
For $x\in \Bbb{R}\setminus \{-1,0\}$. The summation formula for Tribonacci quaternion polynomial is as follows:
\begin{equation}\label{eq:13}
\sum_{l=0}^{n}Q_{T,l}(x)=\frac{1}{\delta(x)}\left(Q_{T,n+2}(x)+(1-x^{2})Q_{T,n+1}(x)+Q_{T,n}(x)-\omega(x)\right),
\end{equation}
where $\omega(x)=1+\textbf{i}+(x^{2}+x+1)\textbf{j}+(x^{4}+x^{3}+x^{2}+x+1)\textbf{k}$ and $\delta(x)=x^{2}+x$.
\end{theorem}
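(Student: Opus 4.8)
The plan is to exploit the third-order recurrence from Proposition~\ref{prop:1}(i) to telescope the partial sum, rather than appealing to the Binet formula of Theorem~\ref{thm:1}. Writing the recurrence in the form $Q_{T,l}(x)=Q_{T,l+3}(x)-x^{2}Q_{T,l+2}(x)-xQ_{T,l+1}(x)$ and summing over $l=0,\dots,n$, I would reindex each of the three shifted sums back to the partial sum $S:=\sum_{l=0}^{n}Q_{T,l}(x)$. Each shift contributes boundary quaternions at both ends, and collecting the copies of $S$ produces the coefficient $1-x^{2}-x$ on the right-hand side; transposing these to the left yields $(x^{2}+x)S=\delta(x)S$ equal to a finite collection of boundary terms, which is the mechanism that forces the denominator $\delta(x)=x^{2}+x$ in (\ref{eq:13}).

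Next I would split those boundary quaternions into a ``top'' group involving $Q_{T,n+1}(x)$, $Q_{T,n+2}(x)$, $Q_{T,n+3}(x)$ and a ``bottom'' group involving $Q_{T,0}(x)$, $Q_{T,1}(x)$, $Q_{T,2}(x)$. Applying Proposition~\ref{prop:1}(i) once more to replace $Q_{T,n+3}(x)$ by $x^{2}Q_{T,n+2}(x)+xQ_{T,n+1}(x)+Q_{T,n}(x)$, the top group should collapse exactly to $Q_{T,n+2}(x)+(1-x^{2})Q_{T,n+1}(x)+Q_{T,n}(x)$, matching the numerator in (\ref{eq:13}). The bottom group reduces to the fixed quaternion $(x^{2}+x-1)Q_{T,0}(x)+(x^{2}-1)Q_{T,1}(x)-Q_{T,2}(x)$, which I then aim to identify with $-\omega(x)$.

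The main obstacle is this last identification. It requires the explicit low-index Tribonacci polynomials computed from (\ref{eq:1}), namely $T_{3}(x)=x^{4}+x$, $T_{4}(x)=x^{6}+2x^{3}+1$ and $T_{5}(x)=x^{8}+3x^{5}+3x^{2}$, and then a component-by-component check of the real, $\textbf{i}$, $\textbf{j}$ and $\textbf{k}$ parts of the bottom group. I expect each component to telescope to $-1$, $-1$, $-(x^{2}+x+1)$ and $-(x^{4}+x^{3}+x^{2}+x+1)$ respectively, which is precisely $-\omega(x)$. Finally, dividing through by $\delta(x)=x^{2}+x=x(x+1)$ is legitimate exactly because $x\in\Bbb{R}\setminus\{-1,0\}$, giving (\ref{eq:13}).

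As an alternative I would note that the identity also yields to induction on $n$: after clearing $\delta(x)$, the inductive step reduces to the single relation $Q_{T,n+3}(x)-x^{2}Q_{T,n+2}(x)+x^{2}Q_{T,n+1}(x)-Q_{T,n}(x)=(x^{2}+x)Q_{T,n+1}(x)$, which is immediate from Proposition~\ref{prop:1}(i). The base case $n=0$ again rests on the same verification that $(x^{2}+x-1)Q_{T,0}(x)+(x^{2}-1)Q_{T,1}(x)-Q_{T,2}(x)=-\omega(x)$, so the computational heart of the argument is unavoidable in either route, and I would present the telescoping version since it derives rather than merely verifies the stated form.
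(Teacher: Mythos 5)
Your argument is correct, and it takes a genuinely different route from the paper. The paper works component-wise: it quotes the scalar summation formula $\sum_{l=0}^{n}T_{l}(x)=\frac{1}{\delta(x)}\left(T_{n+2}(x)+(1-x^{2})T_{n+1}(x)+T_{n}(x)-1\right)$ without proof, applies it to each of the four shifted component sums (picking up the extra constants $1$, $1$, $1+\delta(x)$, $1+\delta(x)(1+x^{2})$ from the dropped initial terms), and reassembles these into $\omega(x)$. You instead telescope the quaternion recurrence of Proposition~\ref{prop:1}(i) directly at the level of $Q_{T,l}(x)$, which makes the proof self-contained (the scalar formula the paper relies on drops out as the real component of your identity) and explains where $\delta(x)=x^{2}+x$ comes from rather than importing it. Your boundary bookkeeping checks out: the top group does collapse to $Q_{T,n+2}(x)+(1-x^{2})Q_{T,n+1}(x)+Q_{T,n}(x)$ after one application of the recurrence, and the bottom group $(x^{2}+x-1)Q_{T,0}(x)+(x^{2}-1)Q_{T,1}(x)-Q_{T,2}(x)$ does evaluate to $-\omega(x)$ using $T_{3}(x)=x^{4}+x$, $T_{4}(x)=x^{6}+2x^{3}+1$, $T_{5}(x)=x^{8}+3x^{5}+3x^{2}$, all of which are correct. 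One further point in your favour: the paper's own proof ends with a sign slip, displaying $+\omega(x)$ in its final line even though the theorem (correctly) asserts $-\omega(x)$; your derivation lands on the correct sign. The hypothesis $x\neq 0,-1$ is used exactly where you say, to divide by $\delta(x)=x(x+1)$.
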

\begin{proof}
Using Eq. (\ref{eq:5}), we have
\begin{align*}
\sum_{l=0}^{n}Q_{T,l}(x)&=\sum_{l=0}^{n}T_{l}(x)+\textbf{i}\sum_{l=0}^{n}T_{l+1}(x)+\textbf{j}\sum_{l=0}^{n}T_{l+2}(x)+\textbf{k}\sum_{l=0}^{n}T_{l+3}(x)\\
&=(T_{0}(x)+T_{1}(x)+\cdots+T_{n}(x))+\textbf{i}(T_{1}(x)+T_{2}(x)+\cdots+T_{n+1}(x))\\
&\ \ +\textbf{j}(T_{2}(x)+T_{3}(x)+\cdots+T_{n+2}(x))+\textbf{k}(T_{3}(x)+T_{4}(x)+\cdots+T_{n+3}(x)).
\end{align*}
Since from $\sum_{l=0}^{n}T_{l}(x)=\frac{1}{\delta(x)}(T_{n+2}(x)+(1-x^{2})T_{n+1}(x)+T_{n}(x)-1)$ and using the notation $\delta(x)=x^{2}+x$, we can write 
\begin{align*}
\delta(x)\sum_{l=0}^{n}Q_{T,l}(x)&=T_{n+2}(x)+(1-x^{2})T_{n+1}(x)+T_{n}(x)-1\\
&\ \ +\textbf{i}\left(T_{n+3}(x)+(1-x^{2})T_{n+2}(x)+T_{n+1}(x)-1\right)\\
&\ \ +\textbf{j}\left(T_{n+4}(x)+(1-x^{2})T_{n+3}(x)+T_{n+2}(x)-(1+\delta(x))\right)\\
&\ \ +\textbf{k}\left(T_{n+5}(x)+(1-x^{2})T_{n+4}(x)+T_{n+3}(x)-(1+\delta(x)(1+x^{2}))\right)\\
&=Q_{T,n+2}(x)+(1-x^{2})Q_{T,n+1}(x)+Q_{T,n}(x)-\omega(x),
\end{align*}
where $\omega(x)=1+\textbf{i}+(x^{2}+x+1)\textbf{j}+(x^{4}+x^{3}+x^{2}+x+1)\textbf{k}$. Finally, $$\sum_{l=0}^{n}Q_{T,l}(x)=\frac{1}{\delta(x)}\left(Q_{T,n+2}(x)+(1-x^{2})Q_{T,n+1}(x)+Q_{T,n}(x)+\omega(x)\right).$$
The theorem is proved.
\end{proof}

\section{Matrix Representation of Tribonacci Quaternion Polynomials}
The most useful technique for generating $\{Q_{T,n}(x)\}$ is by means of what we call the $S(x)$-matrix which has been defined and used in \cite{Sh} and is a generalization of the $R$-matrix defined in \cite{Wa}. We defined the $S(x)$-matrix by
\begin{equation}\label{eq:10}
\left[ 
\begin{array}{c}
T_{n+2}(x) \\ 
T_{n+1}(x) \\ 
T_{n}(x)%
\end{array}%
\right]=\left[ 
\begin{array}{ccc}
x^{2} & x & 1 \\ 
1 & 0 & 0 \\ 
0 & 1 & 0
\end{array}%
\right]^{n}\left[ 
\begin{array}{c}
T_{2}(x) \\ 
T_{1}(x) \\ 
T_{0}(x)%
\end{array}%
\right]
\end{equation}
and
\begin{equation}\label{eq:11}
S^{n}(x)=\left[ 
\begin{array}{ccc}
x^{2} & x & 1 \\ 
1 & 0 & 0 \\ 
0 & 1 & 0
\end{array}%
\right]^{n}=\left[ 
\begin{array}{ccc}
T_{n+1}(x) & xT_{n}(x)+T_{n-1}(x) & T_{n}(x) \\ 
T_{n}(x) & xT_{n-1}(x)+T_{n-2}(x) & T_{n-1}(x) \\ 
T_{n-1}(x) & xT_{n-2}(x)+T_{n-3}(x) & T_{n-2}(x)
\end{array}%
\right],
\end{equation}
where $T_{-1}(x)=0$, $T_{-2}(x)=1$ and $T_{-3}(x)=-x$ for convenience.

Now, let us define the following matrix as
\begin{equation}\label{eq:12}
Q_{S}(x)=\left[ 
\begin{array}{ccc}
Q_{T,4}(x) & xQ_{T,3}(x)+Q_{T,2}(x) & Q_{T,3}(x) \\ 
Q_{T,3}(x) & xQ_{T,2}(x)+Q_{T,1}(x) & Q_{T,2}(x) \\ 
Q_{T,2}(x) & xQ_{T,1}(x)+Q_{T,0}(x) & Q_{T,1}(x)
\end{array}
\right].
\end{equation}
This matrix can be called as the Tribonacci quaternion polynomial matrix. Then, we can give the next theorem to the $Q_{S}(x)$-matrix.

\begin{theorem}
If $Q_{T,n}(x)$ be the $n$-th Tribonacci quaternion polynomial. Then, for $n\geq0$:
\begin{equation}\label{eq:13}
Q_{S}(x)\cdot\left[ 
\begin{array}{ccc}
x^{2} & x & 1 \\ 
1 & 0 & 0 \\ 
0 & 1 & 0
\end{array}%
\right]^{n}=\left[ 
\begin{array}{ccc}
Q_{T,n+4}(x) & P_{T,n+3}(x) & Q_{T,n+3}(x) \\ 
Q_{T,n+3}(x) & P_{T,n+2}(x) & Q_{T,n+2}(x) \\ 
Q_{T,n+2}(x) & P_{T,n+1}(x) & Q_{T,n+1}(x)%
\end{array}%
\right],
\end{equation}
where $P_{T,n}(x)=xQ_{T,n}(x)+Q_{T,n-1}(x)$.
\end{theorem}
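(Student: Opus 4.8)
The plan is to proceed by induction on $n$, exploiting the factorization $S^{n+1}(x)=S^{n}(x)\cdot S(x)$ together with the recurrence of Proposition \ref{prop:1}(i). The point is that right-multiplication by the single matrix $S(x)$ encodes exactly one step of the Tribonacci recurrence, so each application shifts all the indices appearing in the matrix up by one.

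For the base case $n=0$, I would note that $S^{0}(x)$ is the $3\times 3$ identity matrix, so the left-hand side collapses to $Q_{S}(x)$ itself. Comparing $Q_{S}(x)$ from (\ref{eq:12}) with the right-hand side evaluated at $n=0$ and recalling the abbreviation $P_{T,m}(x)=xQ_{T,m}(x)+Q_{T,m-1}(x)$, each of the nine entries matches by inspection; for example the $(1,2)$ entry $xQ_{T,3}(x)+Q_{T,2}(x)$ is precisely $P_{T,3}(x)$, and the pure $Q_{T}$ entries line up with $Q_{T,n+4}(x)$, $Q_{T,n+3}(x)$, etc. at $n=0$.

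For the inductive step, assume the identity holds for some $n\geq 0$ and write $M_{n}$ for the right-hand matrix at index $n$. Then
\begin{equation*}
Q_{S}(x)\cdot S^{n+1}(x)=\bigl(Q_{S}(x)\cdot S^{n}(x)\bigr)\cdot S(x)=M_{n}\cdot S(x),
\end{equation*}
so it suffices to check that $M_{n}\cdot S(x)=M_{n+1}$. Each row of $M_{n}$ has the shape $\bigl(Q_{T,m+1}(x),\,P_{T,m}(x),\,Q_{T,m}(x)\bigr)$, and multiplying such a row on the right by $S(x)$ yields the first column entry
\begin{equation*}
x^{2}Q_{T,m+1}(x)+P_{T,m}(x)=x^{2}Q_{T,m+1}(x)+xQ_{T,m}(x)+Q_{T,m-1}(x)=Q_{T,m+2}(x)
\end{equation*}
by Proposition \ref{prop:1}(i), the second column entry $xQ_{T,m+1}(x)+Q_{T,m}(x)=P_{T,m+1}(x)$, and the third column entry $Q_{T,m+1}(x)$. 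Thus the row becomes $\bigl(Q_{T,m+2}(x),\,P_{T,m+1}(x),\,Q_{T,m+1}(x)\bigr)$, i.e. the same pattern with every index raised by one, which is exactly the corresponding row of $M_{n+1}$. Applying this to all three rows completes the induction.

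I do not expect any genuine obstacle here: the argument is entirely formal, and the only substantive ingredient is the recognition that the first and third columns of $S(x)$ reproduce one step of the recurrence while the middle column rebuilds $P_{T,m+1}(x)$ from the definition $P_{T,n}(x)=xQ_{T,n}(x)+Q_{T,n-1}(x)$. The sole mild care needed is the bookkeeping of the nine entries and the fact that the recurrence of Proposition \ref{prop:1} is valid for every index appearing (all of which are at least $n-1\geq -1$), so the shifted-index identities hold throughout.
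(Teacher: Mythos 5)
Your proof is correct and follows essentially the same route as the paper: induction on $n$ with the base case $n=0$ read off from the definition of $Q_{S}(x)$, and the inductive step carried out by right-multiplying by $S(x)$ and invoking Proposition \ref{prop:1}(i). You are in fact slightly more explicit than the paper, which states the result of the matrix product without the row-by-row verification.
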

\begin{proof}
(By induction on $n$) If $n=0$, then the result is obvious. Now, we suppose it is true for $n=m$, that is
$$Q_{S(x)}\cdot S^{m}(x)=\left[ 
\begin{array}{ccc}
Q_{T,m+4}(x) & P_{T,m+3}(x) & Q_{T,m+3}(x) \\ 
Q_{T,m+3}(x) & P_{T,m+2}(x) & Q_{T,m+2}(x) \\ 
Q_{T,m+2}(x) & P_{T,m+1}(x) & Q_{T,m+1}(x)%
\end{array}%
\right],$$
with $P_{T,n}(x)=xQ_{T,n}(x)+Q_{T,n-1}(x)$. Using the Proposition \ref{prop:1}, for $m\geq 0$, $Q_{T,m+3}(x)=x^{2}Q_{T,m+2}(x)+xQ_{T,m+1}(x)+tQ_{T,m}(x)$. Then, by induction hypothesis we obtain
\begin{align*}
Q_{S}(x)\cdot S^{m+1}(x)&=\left(Q_{S}(x)\cdot S^{m}(x)\right)\cdot S(x)\\
&=\left[ 
\begin{array}{ccc}
Q_{T,m+4}(x) & P_{T,m+3}(x) & Q_{T,m+3}(x) \\ 
Q_{T,m+3}(x) & P_{T,m+2}(x)& Q_{T,m+2}(x) \\ 
Q_{T,m+2}(x) & P_{T,m+1}(x) & Q_{T,m+1}(x)
\end{array}
\right]\left[ 
\begin{array}{ccc}
x^{2} & x & 1 \\ 
1 & 0 & 0 \\ 
0 & 1 & 0
\end{array}
\right]\\
&=\left[ 
\begin{array}{ccc}
Q_{T,m+5}(x) & P_{T,m+4}(x) & Q_{T,m+4}(x) \\ 
Q_{T,m+4}(x) & P_{T,m+3}(x) & Q_{T,m+3}(x) \\ 
Q_{T,m+3}(x) & P_{T,m+2}(x) & Q_{T,m+2}(x)
\end{array}
\right].
\end{align*}
Hence, the Eq. (\ref{eq:13}) holds for all $n\geq0$.
\end{proof}

\begin{corollary}
For $n\geq0$, 
\begin{equation}
Q_{T,n+2}(x)=Q_{T,2}(x)T_{n+1}(x)+(xQ_{T,1}(x)+Q_{T,0}(x))T_{n}(x)+Q_{T,1}(x)T_{n-1}(x),
\end{equation}
with $T_{-1}(x)=0$ for convenience.
\end{corollary}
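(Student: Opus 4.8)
The plan is to read the corollary directly off the matrix identity (\ref{eq:13}) established in the preceding theorem, by extracting a single scalar entry of the product $Q_{S}(x)\cdot S^{n}(x)$. First I would recall from (\ref{eq:11}) that the first column of $S^{n}(x)$ is $(T_{n+1}(x),\,T_{n}(x),\,T_{n-1}(x))^{\top}$, and from the definition (\ref{eq:12}) that the third row of $Q_{S}(x)$ is $(Q_{T,2}(x),\,xQ_{T,1}(x)+Q_{T,0}(x),\,Q_{T,1}(x))$.

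Next I would compute the $(3,1)$ entry of $Q_{S}(x)\cdot S^{n}(x)$ by pairing this row with this column, obtaining
$$Q_{T,2}(x)T_{n+1}(x)+(xQ_{T,1}(x)+Q_{T,0}(x))T_{n}(x)+Q_{T,1}(x)T_{n-1}(x).$$
On the other hand, the theorem asserts that the $(3,1)$ entry of the same product is $Q_{T,n+2}(x)$. Equating the two expressions gives the stated identity.

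The only point needing care is the boundary case $n=0$: there the first column of $S^{0}(x)=I$ must be interpreted as $(T_{1}(x),\,T_{0}(x),\,T_{-1}(x))^{\top}=(1,0,0)^{\top}$, which is precisely why the convention $T_{-1}(x)=0$ is invoked in the statement (consistent with the conventions fixed just after (\ref{eq:11})). For $n\geq1$ every index appearing is nonnegative and no convention is required. Apart from this indexing bookkeeping the argument is a one-line consequence of the matrix theorem and presents no genuine obstacle; the essential observation is simply to select the entry of the product matrix whose value on the right-hand side of (\ref{eq:13}) is exactly $Q_{T,n+2}(x)$.
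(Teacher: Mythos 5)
Your proof is correct and follows exactly the paper's argument: the paper likewise reads the identity off the $(3,1)$ entry of $Q_{S}(x)\cdot S^{n}(x)$, using Eq.~(\ref{eq:11}) for the first column of $S^{n}(x)$ and Eq.~(\ref{eq:13}) for the value of that entry. Your write-up is simply a more explicit version of the paper's one-line proof, with the $n=0$ convention $T_{-1}(x)=0$ handled carefully.
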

\begin{proof}
The proof can be easily seen by the coefficient (3,1) of $Q_{S}(x)\cdot S^{n}(x)$ and the Eq. (\ref{eq:11}).
\end{proof}

\section{Conclusion}
This study examines and studied Tribonacci and Tribonacci-Lucas quaternion polynomials with the help of a simple formula. For this purpose, Tribonacci and Tribonacci-Lucas polynomials was used and examined in detail particularly in Section 1, and it was shown that this sequences to generalize the Tribonacci and Tribonacci-Lucas numbers on quaternions. In this study, Binet formulas, generating functions, matrix representation and some properties of Tribonacci and Tribonacci-Lucas quaternion polynomials were obtained. Quaternions have great importance as they are used in quantum physics, applied mathematics, graph theory and differential equations. Thus, in our future studies we plan to examine Tribonacci and Tribonacci-Lucas octonion polynomials and their key features.

\end{document}